\documentclass[11pt]{amsart}

\usepackage{amssymb}
\usepackage{enumerate}
\usepackage{amsfonts}
\usepackage{amsmath}
\usepackage{mathrsfs}
\allowdisplaybreaks[1]

\newcounter{fig}

\newcommand{\myself}{\author{Gianluca Cassese}
                     \address{Universit\`{a} Milano Bicocca and University of Lugano}
                     \email{gianluca.cassese@unimib.it}
                     \curraddr{Department of Economics, Statistics and Management, 
                                  Building U7, Room 2097, via Bicocca 
                                  degli Arcimboldi 8, 20126 Milano - Italy}}

\newtheorem{theorem}{Theorem}
\theoremstyle{plain}

\newtheorem{corollary}{Corollary}

\newtheorem{lemma}{Lemma}

\setlength{\textwidth}{6.5in}
\setlength{\oddsidemargin}{0in}
\setlength{\evensidemargin}{0in}
\setlength{\textheight}{9in}
\setlength{\topmargin}{0.0in}
\setlength{\headheight}{0.12in}


\newcommand{\Sim}{\mathscr{S}}

\newcommand{\A}{\mathscr{A}}

\newcommand{\N}{\mathbb{N}}

\newcommand{\abs}[1]{\vert #1\vert}

\newcommand{\net}[3]{\langle #1_{#2}\rangle_{#2\in #3} } 
\newcommand{\neta}[1]{\net{#1}{\alpha}{\mathfrak A}} 
\newcommand{\nnet}[3]{\langle #1\rangle_{#2\in #3} } 
 
\newcommand{\seq}[2]{\net{#1}{#2}{\mathbb{N}}} 
\newcommand{\sseq}[2]{\nnet{#1}{#2}{\mathbb{N}}} 
\newcommand{\seqn}[1]{\seq{#1}{n}} 
\newcommand{\sseqn}[1]{\sseq{#1}{n}}

\newcommand{\norm}[1]{\Vert #1\Vert}

\newcommand{\set}[1]{\mathbf{1}_{#1}}


\newcommand{\iref}[1]{(\textit{\ref{#1}})}
\newcommand{\imply}[2]{\iref{#1}$\Rightarrow$\iref{#2}}

\newcommand{\K}{\mathcal K}
\newcommand{\Bl}{\mathscr B(\lambda)}
\newcommand{\M}{\mathscr M}
\newcommand{\Ha}{\mathscr H}

\newcommand{\lp}{\lambda_\M^\perp}
\newcommand{\lc}{\lambda_\M^c}
\newcommand{\Av}[1]{\mathbf A(#1)}
\newcommand{\AM}{\mathbf A(\M)}
\newcommand{\LM}{\mathbf L(\M)}

\newcommand{\cl}[2]{\overline{#1}^{#2}}
\newcommand{\C}{\mathcal C}

\begin{document}

\title[Halmos Savage Theorem]{The Theorem of Halmos and Savage under 
Finite Additivity}
\myself
\date
\today
\subjclass[2000]{Primary 28A33, Secondary 46E27.} 

\keywords{Lebesgue decomposition, Halmos Savage Theorem, Yan Theorem.}

\maketitle

\begin{abstract}
Given a generalization of Lebesgue decomposition we obtain an extension to
the finitely additive setting of the theorems of Halmos and Savage and of Yan.
\end{abstract}

\section{Introduction and Notation}
In a paper that soon became a classic in statistics \cite{halmos savage}, Halmos 
and Savage illustrated the powerful implications of the Radon Nikodym theorem for
the theory of sufficient statistics. One of their results, Lemma 7, deals with dominated
sets of probability measures and states that each such set admits an equivalent,
countable subset. This lemma rapidly obtained its own popularity, proving to be 
very useful in a variety of different contexts, such as the proof of Yan Theorem, 
another classical result in probability and in mathematical finance.

In their proof, Halmos and Savage exploit extensively countable additivity
and the fact that the underlying family is a $\sigma$-algebra. Both
properties are essential as they allow, loosely speaking, for the possibility of
taking limits. For this reason their method of proof cannot be adapted to 
the case in which probability is just \textit{finitely} additive, a situation of 
interest for the subjective theory of probability originating from the seminal 
work of de Finetti \cite{de finetti} and, more generally, for decision theory 
in which countable additivity is more an exception than a rule. Finite additivity
is also unavoidable in many classical problems in which it is needed to take
extensions of the given set function.

In this short note we extend the original result of Halmos and Savage to the
case of finitely additive probability measures and obtain, as a corollary, an
analogous extension of the theorem of Yan \cite{yan}. The proof is,
somehow surprisingly, straightforward and does not make use but of
classical decomposition results of set functions, ultimately due to Bochner
and Phillips.

In the following, $\Omega$ will be a fixed, nonempty set and $\A$ an algebra 
of subsets of $\Omega$. Also given is a positive, additive, bounded set
function $\lambda\in ba(\A)_+$. A set $\M\subset ba(\A)$ is said to be
dominated by $\lambda$ if $\mu\ll\lambda$ for every $\mu\in\M$ (in symbols
$\M\ll\lambda$). For the theory of finitely additive measures and integrals we 
mainly borrow notation, definitions and terminology from Dunford and Schwartz 
\cite{bible}, although we prefer the symbol $\abs\mu$ to denote the total variation 
measure generated by $\mu$ and we write $\mu_f$ to denote that element of
$ba(\A)$ defined implicitly by letting
\begin{equation}
\mu_f(A)=\int\set Afd\mu
\qquad A\in\A
\end{equation}
whenever $f\in L^1(\mu)$. We often write $\mu(f)$ rather than $\int fd\mu$.

The lattice symbol $\perp$ is used to define the orthogonal complement
\begin{equation*}
\M^\perp=\{\nu\in ba(\A):\nu\perp\mu\text{ for every }\mu\in\M\}
\end{equation*}
of $\M$ which is known to be a normal sublattice of $ba(\A)$, see e.g. 
\cite[1.5.6 and 1.5.8]{rao}.

\section{A Decomposition}
\label{sec lebesgue}
We associate with $\M\subset ba(\A)$ the collections
\begin{equation}
\label{AM}
\AM=\left\{\sum_n\alpha_n\frac{\abs{\mu_n}}{1\vee\norm{\mu_n}}:\ 
\mu_n\in\M,\ \alpha_n\ge0\text{ for }n=1,2,\ldots,\sum_n\alpha_n=1\right\}
\end{equation}
\begin{equation}
\label{L(M)}
\LM
=
\{\nu\in ba(\A):\nu\ll m\text{ for some }m\in\Av\M\}
\end{equation}

To obtain a simple generalization of Lebesgue decomposition, we start remarking 
that $\LM$ is a normal sublattice of $ba(\A)$ and so that, by Riesz decomposition 
Theorem \cite[1.5.10]{rao}, $ba(\A)=\LM+\LM^\perp$. To see this, take an 
increasing net $\neta\nu$ in $\LM$ with $\nu=\lim_\alpha\nu_\alpha\in ba(\A)$, 
extract a sequence $\sseqn{\nu_{\alpha_n}}$ 
such that $\norm{\nu-\nu_{\alpha_n}}=(\nu-\nu_{\alpha_n})(\Omega)<2^{-n-1}$, 
choose $m_n\in\Av\M$ such that $m_n\gg\nu_{\alpha_n}$ and define 
$m=\sum_n2^{-n}m_n\in\Av\M$. Since $m\gg m_n\gg\nu_{\alpha_n}$ for each 
$n\in\N$, there is $\delta_n>0$ such that $m(A)<\delta_n$ implies 
$\abs{\nu_{\alpha_n}}(A)<2^{-n-1}$ and, therefore, 
$\abs\nu(A)\le\abs{\nu_{\alpha_n}}(A)+2^{-n-1}\le2^{-n}$. This proves
that if $\{\nu_\alpha:\alpha\in\mathfrak A\}$ is a nonempty family in $\LM$ and
if $\bigvee_{\alpha\in\mathfrak A}\nu_\alpha$ exists in $ba(\A)$, then necessarily
$\bigvee_{\alpha\in\mathfrak A}\nu_\alpha\in\LM$. Moreover, 
$\abs{\nu_1}\le\abs\nu$ and $\nu\in\LM$ imply $\nu_1\in\LM$. Noting that 
$\LM^\perp=\AM^\perp$ we  obtain the following:

\begin{lemma}
\label{lemma lebesgue}
For each $\lambda\in ba(\A)$ and $\M\subset ba(\A)$ there is a unique way 
of writing
\begin{equation}
\label{lebesgue}
\lambda=\lc+\lp
\end{equation}
with $\lc\in\LM$ and $\lp\perp\AM$. If $\lambda$ is positive or countably additive 
then so are $\lp$ and $\lc$.
\end{lemma}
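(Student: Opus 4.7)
The plan is to reduce everything to the Riesz decomposition theorem for $ba(\A)$, building on the two facts---already established in the paragraph preceding the statement---that $\LM$ is a normal sublattice of $ba(\A)$ and that $\LM^\perp = \AM^\perp$. Indeed, Riesz's theorem \cite[1.5.10]{rao} then yields a direct sum decomposition $ba(\A) = \LM \oplus \LM^\perp$, so any $\lambda$ can be written as $\lc + \lp$ with $\lc \in \LM$ and $\lp \in \LM^\perp = \AM^\perp$, which gives existence at once.

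For uniqueness I would suppose $\lambda = \nu + \eta$ is a second decomposition of the required form, with $\nu \in \LM$ and $\eta \perp \AM$. Then $\lc - \nu = \eta - \lp$ lies simultaneously in $\LM$ and in $\AM^\perp = \LM^\perp$, so it is orthogonal to itself and therefore zero, forcing $\nu = \lc$ and $\eta = \lp$.

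To establish preservation of positivity I would invoke the standard fact that band projections in a Riesz space are positive operators dominated by the identity: when $\lambda \geq 0$, one has $0 \leq \lc \leq \lambda$, so that $\lp = \lambda - \lc \geq 0$ as well.

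Finally, for countable additivity I would first reduce to the positive case via the Jordan decomposition, which splits any countably additive $\lambda$ into countably additive positive parts $\lambda^+$ and $\lambda^-$. For positive countably additive $\lambda$ the inequalities $0 \leq \lc, \lp \leq \lambda$ force both components to be continuous along decreasing sequences $A_n \downarrow \emptyset$, hence countably additive. Applying this to $\lambda^+$ and $\lambda^-$ separately and invoking uniqueness of the decomposition to identify $\lc$ and $\lp$ with the corresponding differences completes the argument. The main step is really the Riesz decomposition theorem itself; no additional difficulty is expected, since once the normal-sublattice property of $\LM$ is in hand, everything reduces to elementary order-theoretic manipulations.
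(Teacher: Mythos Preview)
Your proposal is correct and follows exactly the approach of the paper: the paper's argument is the paragraph immediately preceding the lemma, which establishes that $\LM$ is a normal sublattice with $\LM^\perp=\AM^\perp$ and then invokes the Riesz decomposition theorem \cite[1.5.10]{rao}, leaving uniqueness, positivity, and countable additivity as implicit consequences. You simply spell out those routine consequences (orthogonality forcing uniqueness, positivity of band projections, and domination by $\lambda$ giving continuity at $\emptyset$), which is entirely in line with what the paper intends.
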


\section{The Halmos-Savage Theorem and its Implications}
\label{sec halmos savage}

We now prove the main result of the paper. Let us mention that dominated sets
of measures arise whenever dealing with a model, a statistical model e.g., in
which it is posited the existence of a reference probability measure.

\begin{theorem}[Halmos and Savage]
\label{th halmos savage}
$\M\subset ba(\A)$ is dominated if and only if $\M\ll m$ for some $m\in\Av\M$.
\end{theorem}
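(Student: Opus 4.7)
The ``if'' direction is immediate: since $\Av\M\subset ba(\A)_+$, any $m\in\Av\M$ is itself a dominant measure, so $\M\ll m$ makes $\M$ dominated.

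For the ``only if'' direction, suppose $\M\ll\lambda$ for some $\lambda\in ba(\A)_+$. The plan is to apply Lemma \ref{lemma lebesgue} to write $\lambda=\lc+\lp$, pick an element $m\in\Av\M$ witnessing $\lc\in\LM$ (so that $\lc\ll m$), and then show that this single $m$ dominates every $\mu\in\M$. The construction \eqref{AM} is designed precisely so that, for each $\mu\in\M$, the normalized measure $\abs\mu/(1\vee\norm\mu)$ is itself in $\Av\M$, which forces $\lp\perp\abs\mu$ via the orthogonality condition $\lp\perp\AM$.

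The core of the proof is then to check: if $A\in\A$ satisfies $m(A)=0$, then $\abs\mu(A)=0$ for each $\mu\in\M$. Fix such an $A$ and $\mu\in\M$, and let $\epsilon>0$. Absolute continuity $\abs\mu\ll\lambda$ provides $\delta>0$ with $\lambda(F)<\delta$ implying $\abs\mu(F)<\epsilon$; since $m(A)=0$ forces $\lc(A)=0$, we get $\lambda(A)=\lp(A)$. Using $\lp\perp\abs\mu$, choose $E\in\A$ with $\lp(E)<\min(\delta,\epsilon)$ and $\abs\mu(E^c)<\epsilon$. Then $\lambda(A\cap E)\le\lp(E)<\delta$, so $\abs\mu(A\cap E)<\epsilon$, while $\abs\mu(A\cap E^c)\le\abs\mu(E^c)<\epsilon$. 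Splitting $A=(A\cap E)\cup(A\cap E^c)$ gives $\abs\mu(A)<2\epsilon$, and letting $\epsilon\to 0$ yields $\abs\mu(A)=0$.

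The main obstacle, or at least the only subtle point, is the handling of orthogonality and absolute continuity in the purely finitely additive regime: one cannot pass to Hahn decompositions or limits as in the countably additive case, so the argument must proceed entirely through the $\varepsilon$-$\delta$ characterizations and the built-in stability of $\Av\M$ under the scaling $\mu\mapsto\abs\mu/(1\vee\norm\mu)$. Once Lemma \ref{lemma lebesgue} is in hand, however, this is a short combinatorial-analytic argument, and no appeal to Radon--Nikodym derivatives or countable additivity is needed.
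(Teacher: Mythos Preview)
Your overall strategy---decompose $\lambda=\lc+\lp$ via Lemma~\ref{lemma lebesgue}, take $m\in\Av\M$ with $\lc\ll m$, and then argue that each $\mu\in\M$ is absolutely continuous with respect to $m$---is exactly the paper's approach. But the ``core of the proof'' you isolate is the wrong condition: you verify only that $m(A)=0$ implies $\abs\mu(A)=0$. In the finitely additive setting this null-set criterion is \emph{strictly weaker} than absolute continuity, which here means the $\varepsilon$--$\delta$ condition (for every $\varepsilon>0$ there is $\delta>0$ with $m(A)<\delta\Rightarrow\abs\mu(A)<\varepsilon$); see \cite[III.4.12]{bible} or \cite{rao}. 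A standard counterexample: with $\A=2^{\N}$, let $m(A)=\sum_{n\in A}2^{-n}$ and let $\mu$ be any Banach limit; then $m(A)=0$ only for $A=\varnothing$, so the null-set condition holds trivially, yet $m(\{n,n+1,\ldots\})\to0$ while $\mu(\{n,n+1,\ldots\})=1$. Your argument uses $m(A)=0$ essentially, namely to get $\lc(A)=0$ exactly and hence $\lambda(A\cap E)=\lp(A\cap E)\le\lp(E)$.

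The fix is minor and is what the paper does: run the same estimate in $\varepsilon$--$\delta$ form. Given $\varepsilon>0$, choose $\delta_1>0$ from $\abs\mu\ll\lambda$, then $E\in\A$ with $\lp(E)<\delta_1/2$ and $\abs\mu(E^c)<\varepsilon$ from $\lp\perp\abs\mu$, and finally $\delta>0$ from $\lc\ll m$ so that $m(A)<\delta$ gives $\lc(A)<\delta_1/2$. Then $m(A)<\delta$ yields $\lambda(A\cap E)\le\lc(A)+\lp(E)<\delta_1$, hence $\abs\mu(A\cap E)<\varepsilon$, and $\abs\mu(A)\le\abs\mu(A\cap E)+\abs\mu(E^c)<2\varepsilon$. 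The paper phrases this as showing $\M\ll\lc$ directly (then transitivity with $\lc\ll m$ finishes), but the content is the same. Ironically, the ``only subtle point'' you flagged---handling absolute continuity without countable additivity---is precisely where your argument slips.
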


\begin{proof}
$\lambda$ dominates $\M$ if and only if $\lc$ does. In fact, choose $\mu\in\M$ 
and $\varepsilon>0$ and let $\delta$ be such that $\lambda(A)<\delta$ 
implies $\abs\mu(A)<\varepsilon$. Pick $B\in\A$ such that 
$\abs\mu(B^c)+\lambda_\M^\perp(B)<(\delta/2)\wedge\varepsilon$. 
Then $\lambda_\M^c(A)<\delta/2$ implies $\lambda(A\cap B)<\delta$ 
and thus
$
\abs\mu(A)\le\abs\mu(A\cap B)+\varepsilon\le2\varepsilon
$.
Lemma \ref{lemma lebesgue} proves the claim.
\end{proof}

To rephrase the above Theorem in the language of Halmos and Savage, observe
that if $\M_0=\{\mu_1,\mu_2,\ldots\}$ is the subfamily of $\M$ generating
$m=\sum_n2^{-n}\abs{\mu_n}/(1\vee\norm{\mu_n})$ and $\seq Ak$ is a sequence
in $\A$, then $\lim_k\abs{\mu_n}(A_k)=0$ for $n=1,2,\ldots$ if and only if
$\lim_k\abs{\mu}(A_k)=0$ for all $\mu\in\M$ and $\M_0$ may then be said
to be \textit{equivalent} to $\M$.

A typical application is the following:

\begin{corollary}
\label{cor drewnowski}
Let $\Ha\subset\A$ and $
\A_\Ha
=
\left\{A\in\A:\inf_{\{\alpha\subset\Ha,\ \alpha\text{ finite}\}}
\lambda\left(A\backslash\bigcup_{\alpha}H\right)=0\right\}
$. 
There exists $H_1,H_2,\ldots\in\Ha$ such
that
\begin{equation}
\label{limit}
\lim_k\sup_{A\in\A_\Ha}\lambda\left(A\backslash\bigcup_{n\le k}H_n\right)=0
\end{equation}
\end{corollary}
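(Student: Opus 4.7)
The plan is to encode the supremum appearing in \eqref{limit} as a single element of $ba(\A)$ and then apply Lemma~\ref{lemma lebesgue}. I would set
$$
\nu(B)=\sup\{\lambda(C):C\in\A_\Ha,\ C\subset B\},
\qquad B\in\A.
$$
Since $\A_\Ha$ is closed under finite unions and is an ideal in $\A$ (i.e.\ $E\in\A,\ C\in\A_\Ha,\ E\subset C$ imply $E\in\A_\Ha$), one checks that $\nu\in ba(\A)_+$ and that $\nu(E)=\lambda(E)$ whenever $E\in\A$ is contained in some element of $\A_\Ha$. In particular, for any $H_1,\ldots,H_k\in\Ha$ and any $A\in\A_\Ha$ the set $A\setminus\bigcup_{n\le k}H_n$ lies in $\A_\Ha$, so that
$$
\sup_{A\in\A_\Ha}\lambda\Bigl(A\setminus\bigcup_{n\le k}H_n\Bigr)=\nu\Bigl(\Omega\setminus\bigcup_{n\le k}H_n\Bigr).
$$
It thus suffices to exhibit $H_n\in\Ha$ with $\nu\bigl(\Omega\setminus\bigcup_{n\le k}H_n\bigr)\to 0$.

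I would then set $\M=\{\lambda_{\set H}:H\in\Ha\}$ and apply Lemma~\ref{lemma lebesgue} to decompose $\nu=\nu_\M^c+\nu_\M^\perp$. Granted the key claim $\nu_\M^\perp=0$ (discussed below), this gives $\nu\ll m$ for some $m\in\Av\M$, say
$$
m=\sum_n\alpha_n\frac{\lambda_{\set{H_n}}}{1\vee\norm{\lambda_{\set{H_n}}}},
\qquad H_n\in\Ha,\ \alpha_n\ge0,\ \sum_n\alpha_n=1.
$$
The sequence $(H_n)$ then does the job: with $U_k=\bigcup_{n\le k}H_n$, the terms $n\le k$ vanish since $\lambda(H_n\setminus U_k)=0$, and the rest is dominated by $\sum_{n>k}\alpha_n\to 0$. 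Hence $m(\Omega\setminus U_k)\to 0$, and $\nu\ll m$ delivers \eqref{limit}.

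The crux, and the main obstacle, is to establish $\nu_\M^\perp=0$. I would proceed in two steps. First, the orthogonality $\nu_\M^\perp\perp\lambda_{\set H}$ (which holds for each $H\in\Ha$ because $\lambda_{\set H}/(1\vee\norm{\lambda_{\set H}})\in\Av\M$) combined with the identity $\nu(E)=\lambda(E)$ for $E\subset H$ forces $\nu_\M^\perp(H)=0$: given $\varepsilon>0$, choose $B\in\A$ with $\nu_\M^\perp(B)+\lambda(H\setminus B)<\varepsilon$, so that $\nu_\M^\perp(H)\le\nu_\M^\perp(B)+\nu(H\setminus B)<2\varepsilon$. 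Approximation by finite unions of elements of $\Ha$ then propagates this to $\nu_\M^\perp(C)=0$ for every $C\in\A_\Ha$. Second, if $\gamma:=\nu_\M^\perp(\Omega)>0$, then $\nu(\Omega)\ge\gamma$ furnishes $C_0\in\A_\Ha$ with $\lambda(C_0)>\gamma/2$; since $\nu_\M^\perp(C_0)=0$ one still has $\nu_\M^\perp(\Omega\setminus C_0)=\gamma$, and the same extraction applied to $\Omega\setminus C_0$ produces $C_1\in\A_\Ha$ disjoint from $C_0$ with $\lambda(C_1)>\gamma/2$. Iterating yields pairwise disjoint $C_0,C_1,C_2,\ldots\in\A_\Ha$ each of $\lambda$-measure exceeding $\gamma/2$, contradicting $\lambda(\Omega)<\infty$ by finite additivity.
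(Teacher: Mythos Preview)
Your argument is correct, and it proceeds along a genuinely different line from the paper. The paper invokes Theorem~\ref{th halmos savage} on $\M=\{\lambda_{\set H}:H\in\Ha\}$ to obtain a dominating $m\in\AM$, then establishes the convergence in three stages: first for single sets $H\in\Ha$, next for the auxiliary class $\Ha_1$ of finite disjoint unions $\bigcup_j A_j\cap K_j$, and finally for $\A_\Ha$ by approximation. You instead package the whole supremum into the inner measure $\nu(B)=\sup\{\lambda(C):C\in\A_\Ha,\ C\subset B\}$, so that the target \eqref{limit} becomes the single statement $\nu(\Omega\setminus U_k)\to0$, and you appeal directly to Lemma~\ref{lemma lebesgue} (rather than Theorem~\ref{th halmos savage}) together with the exhaustion argument showing $\nu_\M^\perp=0$. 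Your route bypasses the intermediate class $\Ha_1$ and the double-limit estimate entirely; the paper's route, by contrast, exhibits the corollary as a direct application of the Halmos--Savage theorem just proved. A minor remark: in your bound $\nu_\M^\perp(H)\le\nu_\M^\perp(B)+\nu(H\setminus B)$ you actually get $<\varepsilon$ rather than $<2\varepsilon$, since the two summands together are $<\varepsilon$ by the choice of $B$; this of course does not affect the conclusion.
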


\begin{proof}
Write $\M=\{\lambda_H:H\in\Ha\}$ and choose 
$m=\sum_n\alpha_n\lambda_{H_n}\in\Av\M$ to be such that $m\gg\M$. 
By construction, for each $H\in\Ha$, we conclude 
$
\lim_k\lambda(H\backslash\bigcup_{n=1}^k H_n)
=
\lim_k\lambda_H(\bigcap_{n=1}^k H_n^c)
=
0
$. 
Consider a disjoint union $B=\bigcup_{j=1}^IA_j\cap K_j$ with $A_j\in\A$ and 
$K_j\in\Ha$ for $j=1,\ldots,I$ and denote by $\Ha_1$ the corresponding class. 
But then, since $\lambda_{K_j}\in\M$ for $j=1,\ldots,I$,
\begin{align*}
\lambda\left(B\cap\bigcap_{n\le k} H_n^c\right)
&=
\sum_{j=1}^I
\lambda\left(A_j\cap K_j\cap\bigcap_{n\le k} H_n^c\right)\\
&=
\lim_r\sum_{j=1}^I
\lambda\left(A_j\cap K_j\cap\bigcap_{n\le k} H_n^c\cap\bigcup_{n\le r} H_n\right)\\
&\le
\lim_r\lambda\left(\bigcap_{n\le k} H_n^c\cap\bigcup_{n\le r} H_n\right)
\end{align*}
so that $\lim_k\sup_{B\in\Ha_1}\lambda(B\cap\bigcap_{n\le k} H_n^c)=0$. Let now
$A\in\A_\Ha$. We have
\begin{align*}
\lim_k\sup_{A\in\A_\Ha}\lambda\left(A\cap\bigcap_{n\le k} H_n^c\right)
&=
\lim_k\sup_{A\in\A_\Ha}\sup_{K_1,\ldots,K_I\in\Ha}%
\lambda\left(A\cap\bigcup_{j=1}^IK_j\cap\bigcap_{n\le k} H_n^c\right)\\
&\le
\lim_k\sup_{B\in\Ha_1}\lambda\left(B\cap\bigcap_{n\le k} H_n^c\right)\\
&=
0
\end{align*}
which proves \eqref{limit}. 
\end{proof}

For the next result, define the $\lambda$-completion of $\A$ as follows
\begin{equation}
\label{A(l)}
\A(\lambda)=\left\{B\subset \Omega:
\inf_{\{A,A'\in\A:A\subset B\subset A'\}}\lambda(A'\backslash A)=0
\right\}
\end{equation}
It is clear that $\lambda$ admits exactly one extension to $\A(\lambda)$ defined by
letting 
\begin{equation}
\label{bar l}
\bar\lambda(B)
=
\sup_{\{A\in\A:A\subset B\}}\lambda(A)
=
\inf_{\{A'\in\A:B\subset A'\}}\lambda(A')
\qquad B\in\A(\lambda)
\end{equation}

Finite additivity often emerges upon taking extensions of a countably additive set function.
The following Corollary examines one such situation and establishes countable
additivity holds at least locally along some sequence.

\begin{corollary}
\label{cor bar l}
Let $\Bl=\A(\lambda)\backslash\A$ be non empty. There exists a disjoint sequence 
$\seqn A$ in $\A$ such that $\bigcup_nA_n\in\A(\lambda)$ and
\begin{equation}
\label{bar lambda}
\bar\lambda(B)=\sum_n\bar\lambda(B\cap A_n)
\qquad
B\in\A(\lambda)
\end{equation}
\end{corollary}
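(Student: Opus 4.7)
The plan is to reduce this to Corollary \ref{cor drewnowski}, but applied in the completion rather than in the original algebra: I replace $\lambda$ by its extension $\bar\lambda$ on $\A(\lambda)$, which is still positive, bounded, and finitely additive, so the standing setting of the paper remains in force. The distinguished subfamily will be the original algebra itself, $\Ha=\A\subset\A(\lambda)$.

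The first key observation is that with this choice, $\A(\lambda)_{\Ha}=\A(\lambda)$. Indeed by the defining property \eqref{A(l)}, every $B\in\A(\lambda)$ admits, for each $\varepsilon>0$, some $A\in\A$ with $A\subset B$ and $\bar\lambda(B\setminus A)<\varepsilon$, so the singleton $\alpha=\{A\}\subset\Ha$ already witnesses $\inf_\alpha\bar\lambda(B\setminus\bigcup_\alpha H)=0$. Corollary \ref{cor drewnowski}, applied in this enlarged setting, then supplies $H_1,H_2,\ldots\in\A$ with
\begin{equation*}
\lim_k\sup_{B\in\A(\lambda)}\bar\lambda\Bigl(B\setminus\bigcup_{n\le k}H_n\Bigr)=0.
\end{equation*}
I then disjointify in the customary way, setting $A_n=H_n\setminus\bigcup_{j<n}H_j\in\A$, which leaves the partial unions unchanged.

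To conclude, specializing the uniform bound to $B=\Omega$ gives $\bar\lambda(\Omega\setminus\bigcup_{n\le k}A_n)\to 0$, so the sandwich $\bigcup_{n\le k}A_n\subset\bigcup_nA_n\subset\Omega$ shows, via \eqref{A(l)}, that $\bigcup_nA_n\in\A(\lambda)$. For \eqref{bar lambda}, the finite additivity of $\bar\lambda$ yields
\begin{equation*}
\bar\lambda(B)=\sum_{n\le k}\bar\lambda(B\cap A_n)+\bar\lambda\Bigl(B\setminus\bigcup_{n\le k}A_n\Bigr),
\end{equation*}
and letting $k\to\infty$ makes the residual term vanish by the uniform bound above. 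The only point requiring genuine care is the identification $\A(\lambda)_{\Ha}=\A(\lambda)$ together with the legitimacy of re-running Corollary \ref{cor drewnowski} in the completion; once these are granted the rest is essentially bookkeeping, and the hypothesis $\Bl\ne\emptyset$ appears to play only the role of a non-triviality assumption rather than entering the proof directly.
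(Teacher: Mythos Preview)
Your proof is correct and rests on the same key tool as the paper, namely Corollary~\ref{cor drewnowski}, but your choice of setting is different and in fact yields a cleaner argument. The paper applies Corollary~\ref{cor drewnowski} in the original pair $(\A,\lambda)$ with $\Ha=\{H\in\A:H\subset B\text{ for some }B\in\Bl\}$; this produces the identity \eqref{bar lambda} first only for $B\in\Bl$, and one then has to invoke closure of $\Bl$ under complementation to obtain $\bigcup_nA_n\in\A(\lambda)$ and to extend \eqref{bar lambda} to all of $\A(\lambda)$. By contrast, your move of passing to the completed pair $(\A(\lambda),\bar\lambda)$ with $\Ha=\A$ gives $\A(\lambda)_\Ha=\A(\lambda)$ outright, so the uniform bound from Corollary~\ref{cor drewnowski} already ranges over every $B\in\A(\lambda)$; both the membership $\bigcup_nA_n\in\A(\lambda)$ (via $B=\Omega$) and the full identity \eqref{bar lambda} then drop out immediately. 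Your remark that the hypothesis $\Bl\ne\emp$ serves only as a non-triviality assumption is also accurate: nothing in your argument uses it, whereas the paper's choice of $\Ha$ is built from $\Bl$ and so leans on it formally.
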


\begin{proof}
Choose
\begin{align*}
\Ha=\{H\in\A:H\subset B\text{ for some }B\in\Bl\}
\end{align*}
in Corollary \ref{cor drewnowski}. Then $\Bl\subset\A_\Ha$.
Extract the sequence $\seqn A$ from the sequence $\seqn H$ of Corollary 
\ref{cor drewnowski} by letting $A_n=H_n\backslash\bigcup_{j<n}H_j$ and
observe that $A_n\in\Ha$. By \eqref{limit} we obtain that
$\bar\lambda(B)=\sum_n\bar\lambda(B\cap A_n)$ for each $B\in\Bl$.
Observe that $\Bl$ is closed with respect to complementation and thus
\begin{align}
\label{eq}
\inf_{\{A'\in\A:\bigcup_nA_n\subset A'\}}\lambda(A')
\le
\lambda(\Omega)
=
\bar\lambda(B)+\bar\lambda(B^c)
=
\sum_n\lambda(A_n)
\le
\sup_{\{A\in\A:A\subset\bigcup_nA_n\}}\lambda(A)
\end{align}
which proves that $\bigcup_nA_n\in\A(\lambda)$. But then
$\bar\lambda(B)
\ge
\bar\lambda(B\cap\bigcup_nA_n)
\ge
\sum_n\bar\lambda(B\cap A_n)$ 
for each $B\in\Bl$. Applying this conclusion to $B\in\Bl$ and its complement
and exploiting \eqref{eq} one concludes that \eqref{bar lambda} necessarily
holds.
\end{proof}

Another possible development of Theorem \ref{th halmos savage} is the 
following finitely additive version of a theorem of Yan \cite[Theorem 2, p. 220]{yan} 
which is well known in stochastic analysis and mathematical finance:

\begin{corollary}[Yan]
\label{cor yan}
Let $\K\subset L^1(\lambda)$ be convex with $0\in\K$, write $\C=\K-\Sim(\A)_+$ 
and denote by $\overline{\C}$ the closure of $\C$ in $L^1(\lambda)$.
The following are equivalent: 
\begin{enumerate}[(i)]
\item\label{eta f} 
for each $f\in L^1(\lambda)_+$ with $\lambda(f)>0$ 
there exists $\eta>0$ such that $\eta f\notin\overline{\C}$;
\item\label{d A} 
for each $A\in\A$ with $\lambda(A)>0$ there exists 
$d>0$ such that $d\set A\notin\overline{\C}$;
\item\label{m} 
there exists a finitely additive probability $P$ on $\A$ such that 
\begin{enumerate}[(a)]
\item
$\K\subset L^1(P)$ and $\sup_{k\in\K}P(k)<\infty$,
\item
$\sup_{\{A\in\A:\lambda(A)>0\}}P(A)/\lambda(A)<\infty$ and
\item
$P(A)=0$ if and only if $\lambda(A)=0$.
\end{enumerate}
\end{enumerate}
\end{corollary}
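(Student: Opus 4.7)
The plan is to prove the cycle (iii) $\Rightarrow$ (i) $\Rightarrow$ (ii) $\Rightarrow$ (iii). The implication (i) $\Rightarrow$ (ii) is essentially definitional: one specializes (i) to $f = \set A$, which lies in $L^1(\lambda)_+$ with $\lambda(\set A) = \lambda(A) > 0$, and the $\eta$ supplied by (i) serves as the required $d$.

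For (iii) $\Rightarrow$ (i) I would argue by contradiction. Condition (b) produces a constant $C$ with $P(|g|) \le C\lambda(|g|)$ for every $g \in L^1(\lambda)$, so $L^1(\lambda)$ embeds continuously into $L^1(P)$. If $\eta f \in \overline{\C}$ for every $\eta > 0$, then an approximating sequence $c_n = k_n - h_n \to \eta f$ in $L^1(\lambda)$, with $k_n \in \K$ and $h_n \in \Sim(\A)_+$, also converges in $L^1(P)$, and integration gives $\eta P(f) = \lim_n P(c_n) \le \lim_n P(k_n) \le \sup_{k \in \K} P(k) < \infty$ by (a). Letting $\eta \to \infty$ forces $P(f) = 0$. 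A truncation-and-simple-function approximation then extracts $a > 0$ and $E \in \A$ with $a\set E \le f$ and $\lambda(E) > 0$; by (c), $P(E) > 0$, so $P(f) \ge aP(E) > 0$, contradicting $P(f) = 0$.

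The bulk of the work is (ii) $\Rightarrow$ (iii), where Theorem \ref{th halmos savage} does the heavy lifting. For each $A \in \A$ with $\lambda(A) > 0$, strict Hahn-Banach separation of $\overline{\C}$ from $d_A \set A$ delivers a continuous linear functional $\varphi_A$ on $L^1(\lambda)$ with $\sup_{c \in \overline{\C}} \varphi_A(c) < \varphi_A(d_A \set A)$. Because $-\Sim(\A)_+ \subset \C$ is a cone, finiteness of this supremum forces $\varphi_A \ge 0$ on $\Sim(\A)_+$, and by density on $L^1(\lambda)_+$. The set function $\mu_A(E) = \varphi_A(\set E)$ then lies in $ba(\A)_+$, satisfies $\mu_A \le \norm{\varphi_A}\lambda$ (in particular $\mu_A \ll \lambda$), $\mu_A(A) > 0$, and $\sup_{k \in \K} \mu_A(k) = \sup_{k \in \K} \varphi_A(k) < \infty$. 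Rescaling $\varphi_A$ by a positive constant I may assume $\mu_A \le \lambda$ and $\sup_{k \in \K} \mu_A(k) \le 1$ while preserving $\mu_A(A) > 0$. The family $\M_0 = \{\mu_A : A \in \A,\ \lambda(A) > 0\}$ is dominated by $\lambda$, so Theorem \ref{th halmos savage} delivers $m = \sum_n \alpha_n \mu_{A_n}/(1 \vee \norm{\mu_{A_n}}) \in \Av{\M_0}$ with $m \gg \M_0$. Since $1 \vee \norm{\mu_{A_n}} \ge 1$ and $\mu_{A_n} \le \lambda$, each summand is bounded by $\lambda$ and so $m \le \lambda$; the normalisation $\sup_\K \mu_{A_n} \le 1$ transfers to $\sup_\K m \le 1$; and whenever $\lambda(A) > 0$ the domination $\mu_A \ll m$ together with $\mu_A(A) > 0$ forces $m(A) > 0$. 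The probability $P = m/m(\Omega)$ then satisfies (a), (b) and (c).

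The main obstacle is arranging the Hahn-Banach output uniformly over $A$ so that the bounds $\mu_A \le \lambda$ and $\sup_\K \mu_A \le 1$ can be imposed simultaneously and then survive the countable convex combination produced by Theorem \ref{th halmos savage}: this is precisely the role of the factors $1 \vee \norm{\mu_n}$ in the definition \eqref{AM} of $\Av{\M}$. A secondary subtlety in (iii) $\Rightarrow$ (i) is that without a Radon-Nikodym derivative one cannot pass directly from $P(f) = 0$ to $\lambda(f) = 0$, and one has to interpose an explicit simple minorant of $f$ and invoke the null-set equivalence (c) on it.
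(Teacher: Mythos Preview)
Your proof is correct and follows essentially the same route as the paper: the trivial implication \imply{eta f}{d A}, Hahn--Banach separation plus positivity-from-the-cone to produce the family $\{\mu_A\}$, normalization and Theorem~\ref{th halmos savage} for \imply{d A}{m}, and the continuous embedding $L^1(\lambda)\hookrightarrow L^1(P)$ together with a simple-function minorant for \imply{m}{eta f}. The only cosmetic difference is that you build $\mu_A$ directly via $\mu_A(E)=\varphi_A(\set E)$ and extend by density, whereas the paper invokes an external representation theorem for positive functionals on $L^1(\lambda)$; and you should write $\limsup_n P(k_n)$ rather than $\lim_n P(k_n)$ in the \imply{m}{eta f} step, since that limit need not exist.
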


\begin{proof}
The implication \imply{eta f}{d A} is obvious. If $A$ and $d$ are as in \iref{d A} 
there exists a continuous linear functional $\phi^A$ on $L^1(\lambda)$ and $a$
and $b$ such that
\begin{equation*}
\sup_{x\in\cl\C{}}\phi^A(x)<a<b<\phi^A(d1_A)
\end{equation*}
Given that $\mathcal C$ contains the convex cone $-\Sim(\A)_+$, that 
$\Sim(\A)_+$ is dense in $L^1(\lambda)_+$ and that $\phi^A$ is continuous, 
we conclude that $\sup_{f\in L^1(\lambda)_+}\phi^A(-f)<\infty$ i.e. that 
$\phi^A\ge0$. It follows from \cite[Theorem 2]{JMAA} that $\phi^A$ admits 
the representation $\phi^A(f)=\mu^A(f)$ for some $\mu^A\in ba(\lambda)_+$. 
Moreover,
\begin{align*}
\sup_{\{B\in\A:\lambda(B)>0\}}\mu^A(B)/\lambda(B)
\le
\sup_{\{f\in L^1(\lambda):\norm f\le1\}}\phi^A(f)
=
\norm{\phi^A}
<
\infty
\end{align*}
and $\sup_{h\in\C}\mu^A(f)<a<b<d\mu^A(A)$ so that $\mu^A$ meets (\textit a)
and (\textit b) above. The inclusion 
$0\in\C$ implies $a>0$ so that $\mu^A(A)>0$. By normalization we can assume 
$\norm{\phi^A}\vee a\le1$. The collection $\M=\{\mu^A:A\in\A,\ \lambda(A)>0\}$ so 
obtained is dominated by $\lambda$ and therefore by some $m\in\Av\M$, by 
Theorem \ref{th halmos savage}. Thus $m\le\lambda$ and 
$\sup_{h\in\C}m(h)\le 1$. If $A\in\A$ and $\lambda(A)>0$ then 
$m\gg\mu^A$ implies $m(A)>0$. The implication \imply{d A}{m} follows upon
letting $P$ be the finitely additive probability obtained from $m$ by normalization. 
Let $P$ be as in \iref{m} so that $L^1(\lambda)\subset L^1(P)$, by (\textit b). 
If $f\in L^1(\lambda)_+$ and $\lambda(f)>0$ then $f\wedge n$ converges 
to $f$ in $L^1(\lambda)$ \cite[III.3.6]{bible} so that we can assume that $f$ is 
bounded. Then, by \cite[4.5.7 and 4.5.8]{rao}, there exists an increasing sequence 
$\seqn f$ in $\Sim(\A)$ with $0\le f_n\le f$ such that $f_n$ converges to $f$ in 
$L^1(\lambda)$ and therefore in $L^1(P)$ too. For $n$ large enough, then, 
$\lambda(f_n)>0$ and, $f_n$ being positive and simple, $P(f_n)>0$. But 
then $P(f)=\lim_nP(f_n)>0$ so that $\eta f$ cannot be an element of 
$\overline\C$ for all $\eta>0$ as $\sup_{h\in\overline\C}P(h)<\infty$.
\end{proof}

An application of Corollary \ref{cor yan} is obtained in \cite[Lemma 3.1]{BK}. 
Corollary \ref{cor drewnowski} also provides a finitely additive version of a useful 
result of Mukherjee and Summers \cite[Lemma 3]{mukherjee summers}, illustrating 
the countable structure of the atoms of an additive set function%
\footnote{I am in debt with an anonymous referee for calling my attention on 
this paper.}.

\begin{corollary}[Mukherjee and Summers]
\label{cor mukherjee}
Let $\lambda$ have atoms. There exists a countable, pairwise disjoint collection 
$G_1,G_2,\ldots$ of $\lambda$-atoms of $\A$ such that for any $\lambda$-atom 
$B\in\A$ there exists $n\in\N$ such that $\lambda(B\Delta G_n)=0$.
\end{corollary}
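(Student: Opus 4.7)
The plan is to invoke Corollary \ref{cor drewnowski} with $\Ha$ equal to the family of all $\lambda$-atoms in $\A$. Every $\lambda$-atom $B\in\A$ lies in $\Ha$, so taking the finite subfamily $\alpha=\{B\}$ immediately gives $B\in\A_\Ha$, and Corollary \ref{cor drewnowski} therefore produces a sequence $\seqn H$ in $\Ha$ with
\begin{equation*}
\lim_k\lambda\left(B\backslash\bigcup_{n\le k}H_n\right)=0
\qquad\text{for every }\lambda\text{-atom }B\in\A.
\end{equation*}

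The argument then rests on the following dichotomy for $\lambda$-atoms: given any two atoms $H,H'\in\A$, applying the atom property of $H$ to $H\cap H'\subset H$ and of $H'$ to $H\cap H'\subset H'$ yields either $\lambda(H\cap H')=0$ or $\lambda(H\Delta H')=0$. Combined with the displayed limit and $\lambda(B)>0$, this forces the existence of some $n$ with $\lambda(B\cap H_n)>0$, and hence $\lambda(B\Delta H_n)=0$. Every $\lambda$-atom thus agrees with some $H_n$ modulo $\lambda$-null sets, and it only remains to convert $\seqn H$ into a disjoint sequence.

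To disjointify, I would recursively extract indices $n_1<n_2<\cdots$ by retaining $n_k$ if and only if $\lambda\bigl(H_{n_k}\cap\bigcup_{j<k}H_{n_j}\bigr)=0$, and set $G_k=H_{n_k}\backslash\bigcup_{j<k}H_{n_j}$. Each $G_k$ is still a $\lambda$-atom, since any $C\in\A$ with $C\subset G_k$ also satisfies $C\subset H_{n_k}$, so the atom property of $H_{n_k}$ gives $\lambda(C)=0$ or $\lambda(H_{n_k}\backslash C)=0$, the latter forcing $\lambda(G_k\backslash C)=0$; by construction the $G_k$ are pairwise disjoint and $\lambda(G_k\Delta H_{n_k})=0$. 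Any discarded $H_n$ must satisfy $\lambda(H_n\cap H_{n_j})>0$ for some $j$ by the selection rule, hence $\lambda(H_n\Delta H_{n_j})=\lambda(H_n\Delta G_j)=0$ by the dichotomy. Combined with the previous paragraph, every $\lambda$-atom $B$ is $\lambda$-equivalent to one of the $G_k$, as required.

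There is no serious obstacle: Corollary \ref{cor drewnowski} performs the heavy lifting, and the remainder is routine bookkeeping based on the atom dichotomy. The mildly delicate point is the disjointification, which simultaneously requires verifying that the $G_k$ inherit the atom property from $H_{n_k}$ and that no equivalence class of $\lambda$-atoms is lost when discarded indices are dropped.
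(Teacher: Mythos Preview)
Your proposal is correct and follows essentially the same route as the paper: apply Corollary~\ref{cor drewnowski} with $\Ha$ the family of $\lambda$-atoms, disjointify the resulting sequence, and use the atom dichotomy to conclude. The only difference is that the paper disjointifies more directly by setting $G_n=H_n\backslash\bigcup_{i<n}H_i$ and then simply discarding those $G_n$ with $\lambda(G_n)=0$, which avoids your recursive selection rule and the separate check that no equivalence class is lost.
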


\begin{proof}
Apply Corollary \ref{cor drewnowski} with $\Ha$ the collection of all $\lambda$-atoms 
of $\A$. Let $\seqn H$ be the corresponding sequence in $\Ha$ and put
$G_n=H_n\backslash\bigcup_{i<n}H_i$. Upon passing to a subsequence if necessary 
we may assume $\lambda(G_n)>0$ so that $G_n\in\Ha$ for each $n\in\N$. If 
$B\in\Ha$ it follows from \eqref{limit} that $\lambda(B\cap G_n)>0$ for some 
$n$. Given that $B$ and $G_n$ are atoms then 
$\lambda(B\backslash G_n)=\lambda(G_n\backslash B)=0$.
\end{proof}

\end{document}